\newcommand{\C}{{\mathbb C}}
\renewcommand{\P}{{\mathbb P}}
\newcommand{\T}{{\mathbb T}}
\newcommand{\D}{{\mathbb D}}
\newcommand{\A}{{\mathcal A}}
\newcommand{\B}{{\mathcal B}}
\renewcommand{\O}{{\mathcal O}}
\newcommand{\R}{{\mathbb R}}
\newtheorem{theorem}{\bf Theorem}
\newtheorem{lemma}{\bf Lemma}
\newtheorem{proposition}{\bf Proposition}
\newtheorem{corollary}{\bf Corollary}
\title{Siciak-Zahariuta extremal functions, \\ analytic discs and polynomial hulls}
\author{Finnur L\'arusson}
\address{School of Mathematical Sciences, University of Adelaide, Adelaide SA 5005, Australia.} 
\email{finnur.larusson@adelaide.edu.au}
\author{Ragnar Sigurdsson}
\address{Science Institute, University of Iceland, Dunhaga 3, IS-107 Reykjav\'ik, Iceland.} 
\email{ragnar@hi.is}
\subjclass[2000]{Primary 32U35.  Secondary 32E20, 32Q65, 32U05.}
\date{23 August 2008}
\begin{document}

\begin{abstract}  We prove two disc formulas for the Siciak-Zahariuta extremal function of an arbitrary open subset of complex affine space.  We use these formulas to characterize the polynomial hull of an arbitrary compact subset of complex affine space in terms of analytic discs.  Similar results in previous work of ours required the subsets to be connected.
\end{abstract}

\maketitle

\tableofcontents

\section{Introduction}

\noindent
The Siciak-Zahariuta extremal function $V_X$ of a subset $X$ of complex affine space $\C^n$ is defined as the supremum of all entire plurisubharmonic functions $u$ of minimal growth with $u|X\leq 0$.  It is also called the pluricomplex Green function of $X$ with logarithmic growth or a logarithmic pole at infinity (although this is a bit of a misnomer if $X$ is not bounded).  A plurisubharmonic function $u$ on $\C^n$ is said to have minimal growth, and belong to the Lelong class $\mathcal L$, if $u-\log^+\|\cdot\|$ is bounded above on $\C^n$.  If $X$ is open and nonempty, then $V_X\in \mathcal L$.  More generally, if $X$ is not pluripolar, then the upper semicontinuous regularization $V_X^*$ of $V_X$ is in $\mathcal L$, and if $X$ is pluripolar, then $V_X^*=\infty$.  Siciak-Zahariuta extremal functions play a fundamental role in pluripotential theory and have found important applications in approximation theory, complex dynamics, and even in arithmetic geometry.

The theory of disc functionals was founded in the late 1980s by Poletsky.  It provides an alternative approach to the extremal functions of pluripotential theory, usually defined as suprema of certain classes of plurisubharmonic functions, by realizing them as envelopes of disc functionals.  By now, disc formulas have been proved for just about all the commonly used plurisubharmonic extremal functions, such as largest plurisubharmonic minorants, including relative extremal functions, and pluricomplex Green functions of various kinds.  This project is advanced here with proofs of the first disc formulas for the Siciak-Zahariuta extremal function of an arbitrary open subset of $\C^n$.  

In previous work \cite{LarussonSigurdsson1}, we treated the connected case, motivated by Lempert's disc formula for the convex case (\cite{Momm}, Appendix).  The first of our two disc formulas is analogous to the formula in \cite{LarussonSigurdsson1}, but has a modified functional and a larger class of discs.  In \cite{LarussonSigurdsson1}, we showed that for every connected open subset $X$ of $\C^n$, the Siciak-Zahariuta extremal function of $X$ is the envelope of the disc functional $J$ with
$$J(f)=-\sum_{\zeta\in f^{-1}(H_\infty)} m_f(\zeta)\log|\zeta|,$$
where $f$ is an analytic disc in $\P^n$ that intersects $H_\infty$ with multiplicity $m_f(\zeta)$ at $\zeta\in\D$, with respect to the class $\A_{\P^n}^X$ of closed analytic discs $f$ in $\P^n$ with $f(0)\in\C^n$ and $f(\T)\subset X$.  (For notation and terminology, see the end of this introduction.)  This result fails for general open sets $X$, which is no wonder, since a closed analytic disc mapping $\T$ into $X$ must have all its boundary values in a single connected component of $X$.  We therefore extend $\A_{\P^n}^X$ to the class $\B_{\P^n}^X$ of analytic discs $f$ in $\P^n$ with $f(0)\in\C^n$ that have a bounded holomorphic lifting $\tilde f:\D\to\C_*^{n+1}$ such that the boundary map $f^*=\pi\circ\tilde f^*$ takes a conull subset of $\T$ into $X$.  
The envelope of $J$ with respect to $\B_{\P^n}^X$ is too small: $V_X\neq E_{\B_{\P^n}^X}J$ for example when $X$ is a disc in the plane (see Section 3).  It turns out to be appropriate to replace $J$ by the functional $I$ with
$$I(f)=-\max\nolimits_{\tilde f}\log|g(0)|,$$
where $\tilde f$ runs through all bounded holomorphic liftings of $f$, and $g$ is the inner factor of the zeroth component of $\tilde f$.  Now $J(f)=-\log|B(0)|$, where $B$ is the Blaschke factor of the zeroth component of every $\tilde f$, so $J\leq I$.

For an arbitrary open subset $X$ of $\C^n$, the fundamental inequality $V_X\leq E_{\B_{\P^n}^X}I$, justifying the choice of $\B_{\P^n}^X$ and $I$, is proved in Proposition \ref{inequality}.  Our main result, Theorem \ref{maintheorem}, is the opposite inequality, giving the disc formula
$$V_X(z)=\inf\{I(f):f\in\B_{\P^n}^X, f(0)=z\}$$
for every $z\in\C^n$.  To prove Theorem \ref{maintheorem}, as in the proof in \cite{LarussonSigurdsson1} of the analogous formula $V_X=E_{\A_{\P^n}^X}J$ for a connected open set $X$, we use our idea of a \lq\lq good\rq\rq\ auxiliary class of analytic discs, but the class is now different.  The proof of the key lemma, here Lemma \ref{mainlemma}, is also different and uses a construction due to Bu and Schachermayer \cite{BuSchachermayer}.  

The second formula, in Theorem \ref{secondformula}, uses a smaller class of discs, for which the new functional $I$ has an alternative formulation.  Let $f=(f_1,\dots,f_n):\D\to\C^n$ be a Nevanlinna disc, that is, an analytic disc whose components lie in the Nevanlinna class $N$.  Factor each component $f_j$ as as $B_j h_j s_j /t_j$, where $B_j$ is a Blaschke product, $h_j$ is an outer function, and $s_j$, $t_j$ are singular functions given by mutually singular measures.  We define the {\it negative mass} $\nu(f)$ of $f$ to be the mass of the smallest Borel measure on $\T$ that is no smaller than the singular measures corresponding to $t_1,\dots,t_n$.  For an open subset $X$ of $\C^n$, we denote by $\mathcal N_X^-$ the set of Nevanlinna discs in $\C^n$ whose boundary map takes a conull subset of $\T$ into $X$, and whose components have $s_j=1$.  Our second disc formula is derived from the first one, and states that the Siciak-Zahariuta extremal function of $X$ is the envelope of $\nu$ with respect to the class $\mathcal N_X^-$.  More explicitly,
$$V_X(z)=\inf\{\nu(f):f\in\mathcal N_X^-, f(0)=z\}$$
for every $z\in\C^n$.

The first major application of the theory of disc functionals was Poletsky's characterization of the polynomial hull $\hat K$ of a compact subset $K$ of $\C^n$ (\cite{Poletsky2}; see also \cite{LarussonSigurdsson0}, Theorem 7.4).  One version of Poletsky's result states that if $B$ is an open ball containing $K$, then $a\in B$ is in $\hat K$ if and only if for every neighbourhood $U$ of $K$ and every $\epsilon>0$, there is a closed analytic disc $f$ in $B$ with $f(0)=a$ and $\sigma(\T\setminus f^{-1}(U))<\epsilon$.

It is an easy consequence of our second disc formula that $a\in\C^n$ is in $\hat K$ if and only if for every neighbourhood $U$ of $K$ and every $\epsilon>0$, there is $f\in\mathcal N_U^-$ with $f(0)=a$ and $\nu(f)<\epsilon$ (Corollary \ref{hulls}).  In \cite{LarussonSigurdsson2}, we proved this under the assumption that $K$ is connected.  Our characterization and Poletsky's complement each other; neither implies the other in any obvious way.

\medskip

Let us list some definitions and notation used in the paper.  For $r>0$, we let $D_r$ denote the open disc $\{z\in\C:|z|<r\}$ with boundary $T_r$.  We set $\D=D_1$.  The unit circle $\T=T_1$ carries the normalized arc length measure $\sigma$.  A bounded holomorphic function, and more generally a Nevanlinna function, $f:\D\to\C$ has a nontangential boundary map $f^*:\T\to\C$, defined almost everywhere on $\T$ and measurable.  We view complex affine space $\C^n$ as the subset of complex projective space $\P^n$ with projective coordinates $[z_0,\dots,z_n]$ where $z_0\neq 0$, and denote by $H_\infty\subset\P^n$ the hyperplane at infinity where $z_0=0$.  The Euclidean norm on $\C^n$ is denoted $\|\cdot\|$.  We write $\C_*^{n+1}=\C^{n+1}\setminus\{0\}$ and let $\pi$ denote the projection $\C_*^{n+1}\to\P^n$.  We set $\lambda(z)=\log|z_0|$ for $z=(z_0,\dots,z_n)\in\C^{n+1}$.

An analytic disc in a complex manifold $Y$ is a holomorphic map $\D\to Y$.  We denote by $\O(\D,Y)$ the set of all analytic discs in $Y$.  A closed analytic disc in $Y$ is a map from the closed unit disc $\overline\D$ into $Y$ that extends to a holomorphic map on a neighbourhood of $\overline\D$.  We denote by $\O(\overline\D,Y)$ the set of all closed analytic discs in $Y$.  A disc functional on $Y$ is a map $\A\to[-\infty,\infty]$, where $\A\subset\O(\D,Y)$.  The envelope of $H$ (with respect to $\A$) is the function $E_\A H:Y\to[-\infty,\infty]$ with
$$E_\A H(y)=\inf\{H(f):f\in\A, f(0)=y\}.$$

\section{Two disc functionals}

\noindent
In \cite{LarussonSigurdsson1}, we introduced the disc functional $J$ on $\P^n$ defined by the formula
$$J(f)=-\sum_{\zeta\in f^{-1}(H_\infty)} m_{f_0}(\zeta)\log|\zeta|\ \geq \ 0, \qquad f\in\O(\D,\P^n).$$
Here, $m_{f_0}(\zeta)$ denotes the multiplicity at $\zeta$ of the intersection of $f$ with $H_\infty$, in other words, the order of the zero of the component $f_0$ at $\zeta$ when $f$ is expressed as $[f_0,\dots,f_n]$ in projective coordinates, that is, lifted to a holomorphic map $(f_0,\dots,f_n):\D\to\C_*^{n+1}$.  When the zeros of $f_0$ are not isolated, that is, $f(\D)\subset H_\infty$, we set $J(f)=\infty$, and when $f(\D)\cap H_\infty=\varnothing$, we set $J(f)=0$.  

We see that $J(f)$ is finite if and only if $f(0)\in\C^n$ and the zeros of $f_0$, counted with multiplicities, satisfy the Blaschke condition.  Then we can factor $f_0$ as $Bh$, where $B$ is a Blaschke product and $h$ is a zero-free holomorphic function on $\D$.  The Blaschke product $B$ is uniquely determined by $f$ up to multiplication by a number in $\T$, and 
$$J(f)=-\log|B(0)|.$$
Thus $f\in\O(\D,\P^n)$ has $J(f)<\infty$ if and only if $f(0)\in\C^n$ and $f$ has a lifting $\D\to\C_*^{n+1}$ whose zeroth component is a Blaschke product.  This lifting is then unique up to multiplication by a number in $\T$.

If $f\in\O(\D,\P^n)$ and $\tilde f=(f_0,\dots,f_n)$ is any holomorphic lifting of $f$, then the Riesz Representation Theorem applied to the subharmonic function $\lambda\circ\tilde f=\log|f_0|$ gives
$$\lambda(\tilde f(0)) = \int_{T_r}\lambda\circ\tilde f\,d\sigma + \frac 1{2\pi}\int_{D_r} \log|\cdot|\,\Delta(\lambda\circ\tilde f)$$
for each $r\in (0,1)$.  Also,
$$\frac 1{2\pi}\int_\D \log|\cdot|\,\Delta(\lambda\circ\tilde f) = \sum_{\zeta\in f_0^{-1}(0)} m_{f_0}(\zeta)\log|\zeta|=-J(f),$$
so
$$J(f)=\lim_{r\to1-}\int_{T_r}\lambda\circ\tilde f\,d\sigma-\lambda(\tilde f(0)).$$
The terms on the right-hand side are finite if and only if $J(f)<\infty$.

Let $X$ be an open subset of $\C^n$.  We denote by $\A_{\P^n}^X$ the set of closed analytic discs $f$ in $\P^n$ with $f(0)\in\C^n$ and $f(\T)\subset X$.  In \cite{LarussonSigurdsson1}, we proved that if $X$ is connected, then the Siciak-Zahariuta extremal function $V_X$ of $X$ satisfies the disc formula
$$V_X=E_{\A_{\P^n}^X}J.$$
We also noted that this is usually false if $X$ is not connected.  While we always have $V_X\leq E_{\A_{\P^n}^X}J$, there are simple examples for which equality fails.  Indeed, by our result, $E_{\A_{\P^n}^X}J$ is the infimum of the Siciak-Zahariuta extremal functions of the connected components of $X$, and this infimum will not be plurisubharmonic in general.

To extend the disc formula to disconnected sets, a sensible strategy would be to replace $\A_{\P^n}^X$ by a larger set $\B$ of analytic discs still satisfying the inequality $V_X\leq E_{\B}J$.  The discs in $\B$ should have boundary values in $X$ in some weak sense.  Also, it is natural to require $\B$ to be invariant under precomposition by automorphisms of $\D$.  The inequality $V_X\leq E_{\B}J$ then turns out to impose a strong restriction on $\B$, made explicit by the following result.

\begin{proposition}
\label{classforJ}
Let $X$ be a nonempty bounded open subset of $\C^n$.  Let $f\in\O(\D,\P^n)$ with $J(f)<\infty$ have a holomorphic lifting $\tilde f=(f_0,\dots,f_n):\D\to\C_*^{n+1}$ such that $f_0,\dots,f_n$ have nontangential limits at almost every point of $\T$ and the boundary map $f^*=\pi\circ\tilde f^*$ takes a conull subset of $\T$ into $X$.  Then the following are equivalent.
\begin{enumerate}
\item[(i)] For every $\alpha\in\text{\rm Aut }\D$ with $f(\alpha(0))\in\C^n$,
$$V_X((f\circ\alpha)(0)) \leq J(f\circ\alpha).$$
\item[(ii)]  If $(B,g_1,\dots,g_n)$ is a lifting of $f$ whose zeroth component $B$ is a Blaschke product, then the other components $g_1,\dots,g_n$ are bounded on $\D$.
\end{enumerate}
\end{proposition}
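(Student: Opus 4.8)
The plan is to collapse the one-parameter family of inequalities in (i) into a single inequality for a subharmonic function on $\D$, and then to read off (ii) from growth estimates supplied by the Lelong class. Since $J(f)<\infty$, factor $f_0=Bh$ with $B$ a Blaschke product and $h$ zero-free, so that $(B,g_1,\dots,g_n):=(f_0/h,f_1/h,\dots,f_n/h)$ is the essentially unique lifting of $f$ whose zeroth component is a Blaschke product; on $\D\setminus B^{-1}(0)$ the disc $f$ has affine coordinates $g_j/B$. First I would evaluate $J(f\circ\alpha)$ for $\alpha\in\text{\rm Aut }\D$. Using the lifting $(B\circ\alpha,g_1\circ\alpha,\dots,g_n\circ\alpha)$, whose zeroth component vanishes exactly at the points $\alpha^{-1}(a_k)$, where the $a_k$ are the zeros of $B$, and using that disc automorphisms preserve the pseudohyperbolic distance (so $\prod_k|\alpha^{-1}(a_k)|=|B(\alpha(0))|$), one finds $J(f\circ\alpha)=-\log|B(\alpha(0))|$ whenever $B(\alpha(0))\neq0$. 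Because $\alpha\mapsto\alpha(0)$ is onto $\D$, condition (i) is therefore equivalent to the single assertion that $u(w):=V_X(f(w))+\log|B(w)|\le0$ for every $w\in\D\setminus B^{-1}(0)$.

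Next I would analyse $u$. As $V_X\in\mathcal L$ is plurisubharmonic and $f$ is holomorphic into $\C^n$ off $B^{-1}(0)$, the function $u$ is subharmonic there. Since $X$ is bounded, say $X\subset\{\|\cdot\|<R\}$, minimal growth gives $\log\|z\|-\log R\le V_X(z)\le\log^+\|z\|+C$, and substituting $\|f(w)\|=\|g(w)\|/|B(w)|$ yields the two-sided estimate
$$\log\|g(w)\|-\log R\ \le\ u(w)\ \le\ \log^+\|g(w)\|+C.$$
In particular $u$ is locally bounded near each isolated zero of $B$, so it extends to a subharmonic function $\bar u$ on all of $\D$, and $u$ is bounded above on $\D$ precisely when $g=(g_1,\dots,g_n)$ is bounded. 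The implication (i)$\Rightarrow$(ii) is then immediate: if $u\le0$, the left-hand estimate forces $\|g(w)\|\le R$, so the $g_j$ are bounded.

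For the converse (ii)$\Rightarrow$(i), assume $g$ bounded, so $\bar u$ is subharmonic and bounded above. For almost every $\zeta\in\T$ the nontangential limit $f^*(\zeta)$ exists and lies in $X$, whence $V_X(f^*(\zeta))\le0$; since $V_X$ is upper semicontinuous and $\log|B^*(\zeta)|=0$ almost everywhere, letting $w\to\zeta$ radially gives $\limsup_{r\to1^-}\bar u(r\zeta)\le0$ for a.e. $\zeta$. I would then invoke the standard potential-theoretic fact that a subharmonic function on $\D$ which is bounded above and has nonpositive radial limits almost everywhere is itself nonpositive; concretely, via the Riesz decomposition $\bar u=h-G\mu$, Littlewood's theorem that the Green potential $G\mu$ has vanishing radial limits a.e., and the Herglotz representation of the least harmonic majorant $h$. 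This yields $\bar u\le0$, hence (i).

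The routine parts are the growth estimate and the reduction of (i) to a single inequality for $u$. The step I expect to demand the most care is (ii)$\Rightarrow$(i): passing from the pointwise almost-everywhere boundary bound to the global inequality $\bar u\le0$, that is, the potential-theoretic control of a subharmonic function bounded above by its radial boundary values. Everything else is bookkeeping with Blaschke products and the defining properties of $\mathcal L$.
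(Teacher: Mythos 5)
Your proposal is correct and follows essentially the same route as the paper's proof: both directions reduce condition (i) to the single inequality $V_X\circ f+\log|B|\le 0$, use the two-sided growth bounds on $V_X$ coming from the boundedness of $X$ and the Lelong class to relate $u$ to $\|(g_1,\dots,g_n)\|$, and conclude (ii)$\Rightarrow$(i) by extending $u$ subharmonically across $B^{-1}(0)$ and using its almost-everywhere nonpositive boundary values together with boundedness above. Your added detail on the last step (Riesz decomposition, Littlewood's theorem, Herglotz) merely fleshes out what the paper asserts in one line.
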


\begin{proof}  (i) $\Rightarrow$ (ii):  By assumption,
$$(V_X\circ f + \log|B|)(\alpha(0)) = V_X((f\circ\alpha)(0))-J(f\circ\alpha)\leq 0$$
for all $\alpha\in\text{Aut }\D$ with $f(\alpha(0))\in\C^n$.  Being bounded, $X$ is contained in a ball $K$ centred at the origin.  Then $V_X\geq V_K:z\mapsto \log^+\|cz\|$ for some constant $c>0$.  Thus, on $\D\setminus f^{-1}(H_\infty)$, we have $\log^+\|cf\|+\log|B|\leq 0$, so $\|cBf\|\leq 1$.  Now $Bf=(g_1,\dots,g_n)$, so $\|(g_1,\dots,g_n)\|\leq 1/c$ on $\D\setminus f^{-1}(H_\infty)$ and hence on $\D$.

(ii) $\Rightarrow$ (i):  We may assume that $\alpha$ is the identity.  Consider the subharmonic function $u=V_X\circ f+\log|B|$ on $\D\setminus f^{-1}(H_\infty)$.  Take a ball $K$ inside $X$.  We may assume that $K$ is centred at the origin.  Then $V_X\leq V_K:z\mapsto \log^+\|cz\|$ for some constant $c>0$.  Thus, 
$$u\leq \log^+\|cf\|+\log|B|=\max\{\log|B|, \log\|c(g_1,\dots,g_n)\|\}$$ 
on $\D\setminus f^{-1}(H_\infty)$.  Since the right-hand side is bounded above on $\D$ by assumption, $u$ is bounded above on $\D\setminus f^{-1}(H_\infty)$, so $u$ extends to a subharmonic function on $\D$ with nontangential limit $0$ at almost every point of $\T$.  Since $u$ is bounded above, we conclude that $u\leq 0$ on $\D$; in particular, $u(0)\leq 0$.
\end{proof}

By this result, the class $\mathcal C\subset\O(\D,\P^n)$ most likely to give $V_X=E_\mathcal C J$ for a nonempty bounded open subset $X$ of $\C^n$, that is, the largest reasonable class with $V_X\leq E_\mathcal C J$, is the class of analytic discs $f$ in $\P^n$ with $f(0)\in\C^n$ that have a holomorphic lifting $\tilde f=(f_0,\dots,f_n):\D\to\C_*^{n+1}$ such that $f_0$ is a Blaschke product, $f_1,\dots,f_n$ are bounded, and the boundary map $f^*=\pi\circ\tilde f^*$ takes a conull subset of $\T$ into $X$.

We do not know whether $V_X=E_\mathcal C J$.  The obstacle, as far as the present work is concerned, is that we see no way to verify or enforce condition (ii) for the disc $g$ constructed in the proof of Lemma \ref{mainlemma} below.  As far as we can tell, $g$ might well fail to satisfy (ii).  Besides, condition (ii) seems awkward and unnatural.  Rather than pursue this line of attack, we are led to a new disc functional $I$, larger than $J$, that agrees with $J$ on closed analytic discs but is more natural and easier to work with for discs with weak boundary regularity.  The main result of this paper is a disc formula for the Siciak-Zahariuta extremal function as the envelope of $I$ with respect to a very simply-defined extension of the class $\A_{\P^n}^X$.

Again, let $X$ be an open subset of $\C^n$.  We denote by $\B_{\P^n}^X$ the set of analytic discs $f$ in $\P^n$ with $f(0)\in\C^n$ that have a bounded holomorphic lifting $\tilde f:\D\to\C_*^{n+1}$ such that the boundary map $f^*=\pi\circ\tilde f^*$ takes a conull subset of $\T$ into $X$.

Since the components of $\tilde f$ are bounded, the boundary map $\tilde f^*$ exists, and the property that $\pi\circ\tilde f^*$ take a conull subset of $\T$ into $X$ does not depend on the choice of $\tilde f$: if one lifting has this property, then they all do.  Clearly, $J(f)<\infty$ for $f\in\B_{\P^n}^X$.  Also, $\A_{\P^n}^X\subset \B_{\P^n}^X$.  If $f\in\B_{\P^n}^X$ and $\psi:\D\to\D$ is a proper holomorphic map, for example an automorphism, with $f(\psi(0))\in\C^n$, then $f\circ\psi\in\B_{\P^n}^X$.

We now define a functional $I$ on the set of all analytic discs $f$ in $\P^n$ with $f(0)\in\C^n$ that have a bounded holomorphic lifting $\tilde f$.  The subharmonic function $\log\|\tilde f\|:\D\to\R$ is bounded above, so it has a Riesz decomposition into a negative potential, a negative singular harmonic function $\log|s|$, where $s$ is a singular holomorphic function, and an absolutely continuous harmonic function $\log|h|$, bounded above, where $h$ is a bounded outer function.  Dividing $\tilde f$ by the nowhere-vanishing holomorphic function $sh$, we obtain a bounded holomorphic lifting $\hat f=(f_0,\dots,f_n)$ of $f$, uniquely determined up to multiplication by a constant in $\T$ by the property that $\log\|\hat f\|$ is a potential.  The bounded holomorphic liftings of $f$ are precisely the products of $\hat f$ and the nowhere-vanishing bounded holomorphic functions on $\D$.  Factor $f_0$ as $Bsh$, where $B$ is a Blaschke product, $s$ is a singular holomorphic function, and $h$ is a bounded outer function.  Thus, $(Bs)h$ is the inner-outer factorization of $f_0$.  Define
$$I(f)=-\log|(Bs)(0)| = \int_\T\lambda\circ\hat f^*\,d\sigma - \lambda(\hat f(0)) \in [0,\infty).$$
We have
$$J(f)=-\log|B(0)|\leq I(f).$$
Clearly, $I(f)=J(f)$ if and only if $f_0$ is the product of a Blaschke product and an outer function.  In particular, this holds if $f$ extends continuously to $\overline\D$.

Note that
$$I(f)=\min_{\tilde f} \int_\T\lambda\circ\tilde f^*\,d\sigma - \lambda(\tilde f(0)),$$
where $\tilde f$ runs through all bounded holomorphic liftings of $f$, since, as noted above, $\tilde f=sh\hat f$, where $s$ is singular and $h$ is outer and bounded.  The minimum is realized precisely by the outer multiples of $\hat f$.

\begin{proposition}
\label{inequality}
Let $X$ be an open subset of $\C^n$.  Then
$$V_X\leq E_{\B_{\P^n}^X}I.$$
\end{proposition}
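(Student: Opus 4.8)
The plan is to fix an arbitrary $f\in\B_{\P^n}^X$ with $f(0)=z$ and to prove directly that $u(z)\le I(f)$ for every entire plurisubharmonic function $u$ of minimal growth with $u|X\le 0$. Taking the supremum over all such $u$ will give $V_X(z)\le I(f)$ by the very definition of $V_X$, and then taking the infimum over all such $f$ will yield $V_X(z)\le E_{\B_{\P^n}^X}I(z)$, which is the assertion. So everything reduces to the single inequality $u(z)\le I(f)$.

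The first step is to homogenize $u$. Given $u\in\mathcal L$, I set $\tilde u(z_0,\dots,z_n)=u(z_1/z_0,\dots,z_n/z_0)+\log|z_0|$ on $\{z_0\neq 0\}$. This is plurisubharmonic and logarithmically homogeneous, and the minimal-growth bound $u\le\log^+\|\cdot\|+C$ translates into $\tilde u(\zeta)\le\log\|\zeta\|+C$ there. Being locally bounded above near the pluripolar set $H_\infty=\{z_0=0\}$, the function $\tilde u$ extends to a plurisubharmonic function on all of $\C^{n+1}$ by removability of $\{z_0=0\}$. The point of extending across $H_\infty$ is precisely that it lets me compose $\tilde u$ with a lifting of $f$ without having to excise the points where $f$ meets the hyperplane at infinity.

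Next, let $\hat f=(f_0,\dots,f_n):\D\to\C_*^{n+1}$ be the distinguished bounded lifting of $f$ from the definition of $I$, so that $I(f)=\int_\T\lambda\circ\hat f^*\,d\sigma-\lambda(\hat f(0))$. Since $\tilde u$ is plurisubharmonic on all of $\C^{n+1}$ and $\hat f$ avoids the origin, $w:=\tilde u\circ\hat f$ is subharmonic on $\D$, and it is bounded above because $\tilde u(\zeta)\le\log\|\zeta\|+C$ and $\hat f$ is bounded. I then run the standard boundary estimate: the circular means $\int_{T_r}w\,d\sigma$ are nondecreasing in $r$, so by the sub-mean value property $w(0)\le\lim_{r\to1}\int_{T_r}w\,d\sigma$; by the reverse Fatou lemma (legitimate since $w$ is bounded above) together with upper semicontinuity of $\tilde u$ along the radial limits $\hat f(r\zeta)\to\hat f^*(\zeta)$, this limit is at most $\int_\T\tilde u\circ\hat f^*\,d\sigma$. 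Hence $\tilde u(\hat f(0))\le\int_\T\tilde u\circ\hat f^*\,d\sigma$.

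Finally I use the boundary condition. For almost every $\zeta\in\T$ the point $f^*(\zeta)=\pi(\hat f^*(\zeta))$ lies in $X\subset\C^n$, so $f_0^*(\zeta)\neq0$ and therefore $\tilde u(\hat f^*(\zeta))=u(f^*(\zeta))+\log|f_0^*(\zeta)|\le\lambda(\hat f^*(\zeta))$, using $u|X\le 0$. Likewise $\tilde u(\hat f(0))=u(z)+\lambda(\hat f(0))$ since $f(0)=z\in\C^n$. Combining the two displays gives
$$u(z)+\lambda(\hat f(0))\le\int_\T\lambda\circ\hat f^*\,d\sigma,$$
that is, $u(z)\le I(f)$, and the proof concludes as indicated. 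The only genuinely delicate points are the plurisubharmonic extension of $\tilde u$ across $H_\infty$, which is what guarantees that $w$ is subharmonic on the whole of $\D$ even where $f$ meets the hyperplane at infinity, and the interchange of limit and integral at the boundary; I expect the extension step to be the main thing to get right, but it is standard, resting on the pluripolarity of $\{z_0=0\}$ and the uniform upper bound on $\tilde u$.
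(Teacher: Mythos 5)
Your argument is correct and is essentially the paper's own proof: both homogenize the competitor function to a plurisubharmonic function on $\C^{n+1}$, extend it across $\{z_0=0\}$ using the local upper bound coming from minimal growth, compose with the distinguished bounded lifting, and apply the boundary sub-mean value inequality for a bounded-above subharmonic function together with the a.e.\ boundary condition $f^*(\zeta)\in X$. The only cosmetic differences are that you run the argument for each $u\in\mathcal L$ with $u|X\le 0$ and take the supremum at the end rather than working with $V_X$ directly, and that you invoke the identity $I(f)=\int_\T\lambda\circ\hat f^*\,d\sigma-\lambda(\hat f(0))$ in place of the paper's explicit computation with the outer factor $h$.
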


If $X$ is connected, then $E_{\B_{\P^n}^X}I \leq E_{\A_{\P^n}^X}I = E_{\A_{\P^n}^X}J=V_X$ by \cite{LarussonSigurdsson1}, so $V_X = E_{\B_{\P^n}^X}I$.

\begin{proof}  Let $\tilde f=(f_0,\dots,f_n)$ be a bounded holomorphic lifting of $f\in\B_{\P^n}^X$ such that $I(f)=-\log|g(0)|$, where $f_0=gh$ with $g$ inner and $h$ outer.  Define a plurisubharmonic function $v$ on $\C^{n+1}$ by the formula
$$v(z_0,\dots,z_n)=V_X(z_1/z_0,\dots,z_n/z_0)+\lambda(z)$$
for $z_0\neq 0$.  Thus defined, $v$ is locally bounded above where $z_0=0$, so $v$ extends plurisubharmonically to $\C^{n+1}$.  Namely, there is $c\in\R$ with $V_X\leq \log^+\|\cdot\|+c$ on $\C^n$, so
\begin{align*}
v(z_0,\dots,z_n) &\leq \log^+\frac {\|(z_1,\dots,z_n)\|}{|z_0|} + \log|z_0|+c \\ &=\log\max\{|z_0|, \|(z_1,\dots,z_n)\|\}+c
\end{align*}
where $z_0\neq 0$ and hence on $\C^{n+1}$.  Since $\tilde f$ is bounded, the subharmonic function $v\circ\tilde f$ is bounded above on $\D$, so it does not have a positive singular term in its Riesz decomposition, and
$$v(\tilde f(0)) \leq \int_{\T}(v\circ\tilde f)^*\,d\sigma.$$
For almost every $\zeta\in\T$, as $\eta\in\D$ approaches $\zeta$ nontangentially, $f(\eta)\in X$ for $\eta$ close enough to $\zeta$, so $(v\circ\tilde f)(\eta)\to\lambda(\tilde f^*(\zeta))=\log|h^*(\zeta)|$.  Hence,
$$v(\tilde f(0))\leq \int_{\T}(v\circ\tilde f)^*\,d\sigma = \int_\T \log|h^*|\,d\sigma = \log|h(0)|.$$
Finally,
$$V_X(f(0))-I(f)=V_X(f(0))+\log|g(0)| = v(\tilde f(0))-\log|h(0)| \leq 0,$$
so $V_X(f(0))\leq I(f)$.
\end{proof}

We conclude this section by showing that Proposition \ref{inequality} may fail if $I$ is replaced by $J$.  Take $X=D_r\subset\C$ with $1<r<e$ and $a\in(0,1)$.  Let $f=\dfrac 1{gs}\in\O(\D,\P^1)$, where $g(z)=\dfrac{z-a}{1-az}$ is an automorphism of $\D$, and $s(z)=\exp\dfrac{z+1}{z-1}$ is the singular function corresponding to the unit mass at 1.  Then $f^{-1}(\infty)=\{a\}$, so $J(f)=-\log a$.  The lifting $(gs, 1)$ of $f$ is bounded.  Now $s$ is a universal covering map $\D\to\D\setminus\{0\}$, so $s$ has nontangential boundary values in $\T\subset X$ almost everywhere on $\T$, so $f$ does as well.  Thus, $f\in\B_{\P^1}^X$ and
$$V_X(f(0)) = \log^+|f(0)/r| = \log\frac e{ar} > -\log a = J(f).$$
This shows that for a domain $X$ as simple as a disc in the plane, $V_X\neq E_{\B_{\P^n}^X}J$.

\section{A disc formula for the Siciak-Zahariuta extremal function}

\noindent
This section contains the proof of our first disc formula.

\begin{theorem}  
\label{maintheorem}
If $X$ is an open subset of $\C^n$, then
$$V_X=E_{\B_{\P^n}^X}I.$$
\end{theorem}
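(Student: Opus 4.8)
The inequality $V_X\le E_{\B_{\P^n}^X}I$ is exactly Proposition \ref{inequality}, so the whole content of the theorem is the reverse inequality $E_{\B_{\P^n}^X}I\le V_X$. My plan is to show that the envelope $u=E_{\B_{\P^n}^X}I$ is itself a competitor in the supremum defining $V_X$: that $u$ is plurisubharmonic, lies in the Lelong class $\mathcal L$, and satisfies $u|X\le 0$. Granting this, the definition of $V_X$ as the supremum of all such functions yields $u\le V_X$, and together with Proposition \ref{inequality} this gives $u=V_X$.

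Two of the three properties are cheap. For $u|X\le 0$, note that if $z\in X$ the constant disc $f\equiv z$ has the bounded, nowhere-vanishing lifting $(1,z_1,\dots,z_n)$ with outer zeroth component $1$, so $f\in\B_{\P^n}^X$ and $I(f)=0$, whence $u(z)\le 0$. For the growth bound, fix a ball $K\subset X$; every closed analytic disc in $\A_{\P^n}^K$ lies in $\B_{\P^n}^X$ and, extending continuously to $\overline\D$, has $I=J$, so by the connected-case formula of \cite{LarussonSigurdsson1} we get $u\le E_{\A_{\P^n}^K}I=E_{\A_{\P^n}^K}J=V_K$, and $V_K(z)=\log^+\|cz\|$ for a suitable $c>0$ is of minimal growth. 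Thus $u$ has minimal growth, and everything hinges on plurisubharmonicity.

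To prove that $u$ is plurisubharmonic I would follow Poletsky's method and verify the sub-mean value inequality $u(f(0))\le\int_\T u\circ f^*\,d\sigma$ along analytic discs $f\colon\overline\D\to\C^n$, which (after passing to the upper semicontinuous regularization, as is standard for envelopes) yields plurisubharmonicity. This reduces to a gluing statement for discs in $\B_{\P^n}^X$, the content of Lemma \ref{mainlemma}: given a central disc $f\in\B_{\P^n}^X$ and, for almost every $\zeta\in\T$, a disc $g_\zeta\in\B_{\P^n}^X$ with $g_\zeta(0)$ close to $f^*(\zeta)$, one can assemble a single $h\in\B_{\P^n}^X$ with $h(0)=f(0)$ and $I(h)$ no larger than $I(f)+\int_\T I(g_\zeta)\,d\sigma(\zeta)$ up to an arbitrarily small error. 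Applying this with a central disc into $\C^n$ (for which the $I$-contribution vanishes) and with $I(g_\zeta)$ close to $u(f^*(\zeta))$ gives exactly the sub-mean inequality. It is natural to carry out the construction in a more tractable auxiliary class on which $I$ is given by the integral $\int_\T\lambda\circ\hat f^*\,d\sigma-\lambda(\hat f(0))$ and on which the boundary data can be prescribed measurably, and then to check that passing to $\B_{\P^n}^X$ costs nothing in the envelope.

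The main obstacle is the construction of the glued lifting with controlled inner factor. Because the discs in $\B_{\P^n}^X$ have only conull boundary values in $X$ and the functional $I$ measures the inner factor $Bs$ of the zeroth component $f_0$, a naive gluing can only make the singular inner part worse, and one must ensure that the combined Blaschke and singular contributions of the assembled zeroth component do not exceed the integrated average. This is precisely where I would invoke the construction of Bu and Schachermayer \cite{BuSchachermayer}: it produces a bounded holomorphic lifting whose zeroth component has essentially outer boundary structure, so that $I(h)$ is governed by the harmonic part $\int_\T\lambda\circ\hat h^*\,d\sigma-\lambda(\hat h(0))$ rather than by any uncontrolled singular mass. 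Establishing this lemma is the heart of the argument; with it in hand, the plurisubharmonicity of $u$ follows by Poletsky's scheme, and the theorem follows as outlined above.
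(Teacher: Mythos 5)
Your reduction to Proposition \ref{inequality} plus the three properties of the envelope is reasonable, and the cheap parts (constant discs for $u|X\le 0$, a ball $K\subset X$ for minimal growth) are fine. The genuine gap is in the central gluing step. You propose to verify the sub-mean value inequality for $u=E_{\B_{\P^n}^X}I$ by gluing boundary discs $g_\zeta$ taken from $\B_{\P^n}^X$ itself, with centres near $f^*(\zeta)$. The Bu--Schachermayer summation $\tilde h+\sum_j\bigl(\tilde f_j\circ\alpha_{jm}-\tilde f_j\circ\alpha_{jm}(0)\bigr)$ does not apply to such discs: one needs each $\tilde f_j$ to be a \emph{closed} analytic disc, continuous up to $\overline\D$, in order to show that $\tilde f_j\circ\alpha_{\nu m}^*$ is uniformly close to a constant on the complementary arcs (where $\alpha_{\nu m}^*$ lies on $T_{e^{-m}}$) and that the boundary values of the sum land within $\delta$ of $\tilde f_j(\T)$, hence in $\pi^{-1}(X)$. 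Discs in $\B_{\P^n}^X$ have only almost-everywhere nontangential boundary values, so none of these estimates is available. The paper's Lemma \ref{mainlemma} therefore glues discs from a specific auxiliary class $\mathcal G_{\P^n}^Y$ of closed analytic discs admitting a lifting with $\max_\T\|\tilde f\|<2\min_\T\|\tilde f\|$ (this ratio condition keeps $\tilde f_j(\T)$ away from the origin and inside the region where the $\delta$-neighbourhood and uniform-continuity conditions (a)--(c) hold), and only for $Y$ relatively compact in $X$. Your sentence about ``a more tractable auxiliary class'' gestures at this, but the class is left unspecified, and ``passing to $\B_{\P^n}^X$ costs nothing in the envelope'' is not how the reduction actually runs.

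The logical scaffolding also differs in a way that matters. The paper never proves that $E_{\B_{\P^n}^X}I$ is plurisubharmonic. Instead, Lemma \ref{mainlemma} shows that $E_{\B_{\P^n}^X}I$ is dominated by the largest plurisubharmonic minorant of $E_{\mathcal G_{\P^n}^Y}J$, Lemma \ref{pshminorantofenv} shows that this minorant is at most $V_Y$, and the sandwich $V_X\le E_{\B_{\P^n}^X}I\le V_Y$ is closed by letting $Y$ run over relatively compact open subsets of $X$ and using $\inf_Y V_Y=V_X$. This sidesteps both the measurable-selection problem for the family $(g_\zeta)$ and any sub-mean value inequality for the envelope itself. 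Your route could be repaired by substituting the paper's lemma and this sandwich for your gluing statement, but as written the key step is asserted for a class of discs on which the construction you cite does not work.
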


By Proposition \ref{inequality}, we need to show that $V_X\geq E_{\B_{\P^n}^X}I$.  The proof is based on two lemmas about an auxiliary class of analytic discs, defined as follows.  Let $X$ be an open subset of $\C^n$.  Let $\mathcal G_{\P^n}^X$ be the subset of $\A_{\P^n}^X$ of all closed analytic discs $f$ in $\P^n$ with $f(0)\in\C^n$ and $f(\T)\subset X$ that lift to a closed analytic disc $\tilde f$ in $\C_*^{n+1}$ with
$$\max_\T\|\tilde f\| < 2\min_\T\|\tilde f\|.$$
(The only property of the number $2$ that matters here is that $2>1$.)

\begin{lemma}  
\label{pshminorantofenv}
Let $X$ be a nonempty open subset of $\C^n$.  The envelope $E_{\mathcal G_{\P^n}^X} J$ is upper semicontinuous on $\C^n$.  It has minimal growth on $\C^n$ and vanishes on $X$, so its largest plurisubharmonic minorant is no larger than the Siciak-Zahariuta extremal function $V_X$ of $X$.
\end{lemma}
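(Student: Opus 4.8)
The plan is to verify the three asserted properties of $g:=E_{\mathcal G_{\P^n}^X}J$ one at a time and then assemble them. Vanishing on $X$ is immediate: for $z\in X$ the constant disc $f\equiv z$ lies in $\mathcal G_{\P^n}^X$, since its lift $(1,z)$ has constant norm and so satisfies $\max_\T\|\tilde f\|<2\min_\T\|\tilde f\|$ trivially, and $J(f)=0$; as $J\ge0$ always, $g(z)=0$. The real work is in the growth bound and in upper semicontinuity, both of which exploit the slack built into the definition of $\mathcal G_{\P^n}^X$.

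For minimal growth I would fix $w\in X$ and $\rho>0$ with the ball $\{\|\cdot-w\|<\rho\}\subset X$, shrinking $\rho$ (depending only on $X$) so that $1+(\|w\|+\rho)^2<4\,(1+(\|w\|-\rho)^2)$, which is possible because this ratio tends to $1$ as $\rho\to0$. Given $z\in\C^n$ with $v:=z-w\ne0$, choose $a\in(0,1)$ with $a\|v\|<\rho$ and let $B$ be the Blaschke factor vanishing at $a$. The closed disc $f$ with lift $\tilde f=(B,\,wB-av)$ has $f(0)=z$, carries $\T$ into the circle of radius $a\|v\|<\rho$ about $w$ and hence into $X$, and is nowhere zero (at $\zeta=a$ the last $n$ coordinates equal $-av\ne0$). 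On $\T$ the triangle inequality gives $1+(\|w\|-a\|v\|)^2\le\|\tilde f\|^2\le1+(\|w\|+a\|v\|)^2$, so $\max_\T\|\tilde f\|^2/\min_\T\|\tilde f\|^2\le(1+(\|w\|+\rho)^2)/(1+(\|w\|-\rho)^2)<4$, whence $f\in\mathcal G_{\P^n}^X$; and $J(f)=-\log a$. Letting $a\|v\|\uparrow\rho$ yields $g(z)\le-\log a\to\log\|z-w\|-\log\rho$, so $g-\log^+\|\cdot\|$ is bounded above and $g$ has minimal growth.

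For upper semicontinuity I would perturb a near-optimal disc at a point $z_0$ by projective translations. Writing $\tau_b\colon[z_0:z']\mapsto[z_0:z'+bz_0]$ for $b\in\C^n$, each $\tau_b$ is a linear automorphism of $\C_*^{n+1}$ fixing $H_\infty$, so precomposing the lift by $\tau_b$ leaves the zeroth component, and hence $J$, unchanged while moving the centre by $b$. Given $\epsilon>0$, pick $f\in\mathcal G_{\P^n}^X$ with $f(0)=z_0$ and $J(f)<g(z_0)+\epsilon$. Since $f(\T)$ is a compact subset of the open set $X$ and the inequality $\max_\T\|\tilde f\|<2\min_\T\|\tilde f\|$ is strict, for all $b$ in a neighbourhood of $0$ the translate $\tau_b\circ f$ still maps $\T$ into $X$ and still obeys the slack condition, so $\tau_b\circ f\in\mathcal G_{\P^n}^X$ with centre $z_0+b$ and the same value $J(\tau_b\circ f)=J(f)$. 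Hence $g(z_0+b)\le J(f)<g(z_0)+\epsilon$ for $b$ near $0$, which is upper semicontinuity at $z_0$.

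Finally I would read off the conclusion. Let $P$ be the largest plurisubharmonic minorant of $g$. Because $g$ is upper semicontinuous, the upper regularization defining $P$ does not exceed $g$, so $P\le g$; because $g$ has minimal growth, $P$ lies in the Lelong class $\mathcal L$; and because $g=0$ on $X$, we get $P|_X\le0$. Thus $P$ is an entire plurisubharmonic function of minimal growth that is nonpositive on $X$, so $P\le V_X$ by the definition of the Siciak-Zahariuta extremal function. The main obstacle is upper semicontinuity: it is precisely the property that forces $P\le g$ (and hence $P|_X\le0$), and, like the growth bound, its proof goes through only because the defining inequality of $\mathcal G_{\P^n}^X$ leaves strict room, so that the perturbed discs stay in the class.
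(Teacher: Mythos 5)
Your proof is correct and follows essentially the same route as the paper: constant discs give vanishing on $X$, a round disc through $z$ with boundary on a small sphere inside $X$ (yours parametrized by a Blaschke factor, the paper's by an explicit automorphism, but the same family of discs) gives the minimal-growth bound $\log\|z-w\|-\log\rho$, and perturbation of a near-optimal disc by small translations, which preserve $J$ and the strict inequality defining $\mathcal G_{\P^n}^X$, gives upper semicontinuity. The only cosmetic caveat is that your estimate $\min_\T\|\tilde f\|^2\ge 1+(\|w\|-\rho)^2$ tacitly uses $\rho\le\|w\|$, which is harmless since you may take $w\ne 0$ and shrink $\rho$ accordingly, exactly as the paper does with its choice $c\ne 0$.
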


\begin{proof}  Take $c\in X$, $c\neq 0$, and choose $r>0$ small enough that
$$\frac{1+(\|c\|+r)^2}{1+(\|c\|-r)^2}<4,$$
and that the closed ball $B$ with centre $c$ and radius $r>0$ is contained in $X$.

For $z\in X$, the constant disc at $z$ is in $\mathcal G_{\P^n}^X$, so $E_{\mathcal G_{\P^n}^X} J(z)=0$.  For $z\in\C^n\setminus B$, let $f=g\circ\phi\in\O(\overline \D, \P^n)$, where 
$$g(\zeta)=\frac r{\zeta\|z-c\|}(z-c)+c,$$
and $\phi$ is an automorphism of $\D$ interchanging $0$ and $g^{-1}(z)=r/\|z-c\|$.  Then $f(\T)\subset\partial B\subset X$.  The lifting $\tilde f=\tilde g\circ\phi$ of $f$, where 
$$\tilde g(\zeta)= (\zeta, \frac r{\|z-c\|}(z-c)+\zeta c),$$ 
satisfies
$$\frac{\max_\T\|\tilde f\|^2}{\min_\T\|\tilde f\|^2} \leq \frac{1+(\|c\|+r)^2}{1+(\|c\|-r)^2}<4,$$
so $f\in\mathcal G_{\P^n}^X$.  Also $f(0)=z$ and $f^{-1}(H_\infty)=\{r/\|z-c\|\}$, so 
$$E_{\mathcal G_{\P^n}^X} J(z)\leq J(f)=\log\|z-c\|-\log r.$$
This proves that $E_{\mathcal G_{\P^n}^X} J$ has minimal growth.

We have shown that for every $z\in\C^n$, there is $f\in\mathcal G_{\P^n}^X$ with $f(0)=z$.  For translations $\tau$ of $\C^n$ sufficiently close to the identity, $f_\tau=\tau\circ f\in\mathcal G_{\P^n}^X$, the map $\tau\mapsto J(f_\tau)$ is constant, and the centres $f_\tau(0)=\tau(z)$ sweep out a neighbourhood of $z$.  Hence, $E_{\mathcal G_{\P^n}^X} J$ is upper semicontinuous at $z$.  
\end{proof}

\begin{lemma}  
\label{mainlemma}
Let $X$ be an open subset of $\C^n$.  Let $Y$ be a nonempty relatively compact open subset of $X$.  For every closed analytic disc $h$ in $\C^n$, continuous function $v\geq E_{\mathcal G_{\P^n}^Y} J$ on $\C^n$, and $\epsilon>0$, there is $g\in\B_{\P^n}^X$ with $g(0)=h(0)$ and
$$I(g) < \int_\T v\circ h\,d\sigma +\epsilon.$$
\end{lemma}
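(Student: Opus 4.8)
The plan is to build $g$ by grafting small analytic discs with boundary in $Y$ onto the boundary of $h$, and then to assemble the grafts into a single disc with a bounded lifting using the concatenation technique of Bu and Schachermayer \cite{BuSchachermayer}. Since $Y$ is nonempty and open, Lemma \ref{pshminorantofenv} applies to $Y$ and shows that $E_{\mathcal G_{\P^n}^Y}J$ has minimal growth, hence is finite everywhere on $\C^n$. So for each $\zeta\in\T$ and each $\delta>0$ there is a disc $f_\zeta\in\mathcal G_{\P^n}^Y$ with $f_\zeta(0)=h(\zeta)$ and
$$J(f_\zeta)<E_{\mathcal G_{\P^n}^Y}J(h(\zeta))+\delta\leq v(h(\zeta))+\delta.$$
Each $f_\zeta$ is a closed disc with $f_\zeta(\T)\subset Y\subset\subset X$, and its closed lifting obeys the uniform ratio bound $\max_\T\|\tilde f_\zeta\|<2\min_\T\|\tilde f_\zeta\|$ built into $\mathcal G_{\P^n}^Y$.

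Next I would discretize. As $h$ is continuous on $\overline\D$, the set $h(\T)$ is compact and $v\circ h$ is uniformly continuous on $\T$. Combining this with the upper semicontinuity from Lemma \ref{pshminorantofenv}, I would select finitely many discs $f_1,\dots,f_N\in\mathcal G_{\P^n}^Y$ and a partition of $\T$ into arcs $\Gamma_1,\dots,\Gamma_N$ so that for $\zeta\in\Gamma_j$ the centre $f_j(0)$ is close to $h(\zeta)$, and so that the Riemann sum satisfies
$$\sum_{j=1}^N \sigma(\Gamma_j)\,J(f_j) < \int_\T v\circ h\,d\sigma + \tfrac\epsilon2.$$
Here the relative compactness of $Y$ in $X$ is used: since $\overline Y$ is a compact subset of $X$, I can translate each $f_j$ by a small vector so that its centre becomes exactly $h(\zeta)$ at the grafting point while its translated boundary still lies in $X$. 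Translations of $\C^n$ are automorphisms of $\P^n$ fixing $H_\infty$, so they leave $J$ unchanged and preserve the defining inequality of $\mathcal G_{\P^n}^Y$ up to a harmless adjustment of the constant.

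The heart of the proof is the concatenation. Normalising the closed liftings $\tilde f_j$ so that their zeroth components match, in modulus, the zeroth component of the trivial lifting $\tilde h=(1,h_1,\dots,h_n)$ at the grafting points, the uniform ratio bound and the boundedness of $\overline Y$ give two-sided bounds on all the $\tilde f_j$, so the assembled lifting stays in $\C_*^{n+1}$ and bounded. Applying the Bu--Schachermayer construction, I would produce a single analytic disc $g$ in $\P^n$ with a bounded holomorphic lifting $\tilde g$ such that $g(0)=h(0)$; the boundary map $g^*$ takes a conull subset of $\T$ into $Y\subset X$, so that $g\in\B_{\P^n}^X$; and the logarithmic boundary integral of the canonical lifting is controlled by the weighted sum above. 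Because $\tilde h$ has constant zeroth component $1$, the central disc contributes $J(h)=0$, so the only contributions to $I(g)$ come from the grafts, and I obtain
$$I(g) < \sum_{j=1}^N \sigma(\Gamma_j)\,J(f_j) + \tfrac\epsilon2 < \int_\T v\circ h\,d\sigma + \epsilon.$$

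The main obstacle is precisely this last construction and estimate. The grafting destroys boundary regularity: the zeroth component of the canonical lifting of $g$ acquires a singular inner factor in addition to a Blaschke factor, so the functional $J$, which sees only the Blaschke part, would undercount the logarithmic loss and the estimate would break. It is exactly here that the functional $I=-\log|(Bs)(0)|$, recording both the Blaschke and the singular inner factors, is the right object, and the Bu--Schachermayer concatenation is the tool that realises the weighted average of the $J(f_j)$ as the value of $I$ on a single disc while keeping the boundary values in $X$ almost everywhere.
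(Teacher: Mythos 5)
Your overall strategy is the paper's: discretize $\T$ into arcs, pick discs $f_j\in\mathcal G_{\P^n}^Y$ centred at $h(\eta_j)$ so that the Riemann sum $\sum_j\sigma(\Gamma_j)J(f_j)$ is controlled by $\int_\T v\circ h\,d\sigma$, assemble them with the Bu--Schachermayer outer functions, and let $I$ rather than $J$ absorb the singular inner factor that the assembly creates. Your closing remark about why $J$ would undercount and $I$ is the right functional is exactly the right diagnosis. But there is a genuine gap at the step you describe as ``the assembled lifting stays in $\C_*^{n+1}$ and bounded.'' The Bu--Schachermayer assembly is the \emph{additive} perturbation
$$\tilde g_m=\tilde h+\sum_{j}\bigl(\tilde f_j\circ\alpha_{jm}-\tilde f_j\circ\alpha_{jm}(0)\bigr),$$
and no normalisation of the $\tilde f_j$ prevents this sum from vanishing at interior points of $\D$. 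The two-sided bounds you invoke only control the \emph{boundary} values (a.e.\ on $A_j$ the value is within $\delta$ of $\tilde f_j(\T)$, which is kept away from the origin by the ratio condition in $\mathcal G_{\P^n}^Y$ together with $\|\tilde f_j(0)\|=\|\tilde h(\eta_j)\|\geq\sqrt{1+\min_\T\|h\|^2}$); they say nothing about interior zeros. So $\tilde g_m$ is a priori only a map $\D\to\C^{n+1}$ and does not define a disc in $\P^n$, let alone an element of $\B_{\P^n}^X$. The paper's fix is an essential extra step: $\tilde g_m^{-1}(0)$ is a discrete Blaschke set, so one precomposes with a holomorphic covering $\phi_m:\D\to\D\setminus\tilde g_m^{-1}(0)$, $\phi_m(0)=0$; since harmonic measure of $\D\setminus\tilde g_m^{-1}(0)$ at the origin is $\sigma$, the boundary map of $\phi_m$ pushes $\sigma$ to $\sigma$, so the centre, the a.e.\ membership of the boundary values in $\pi^{-1}(X)$, and the integral $\int_\T\lambda\circ\tilde g_m^*\,d\sigma$ are all preserved, and $\check g_m=\tilde g_m\circ\phi_m$ is an honest bounded lifting into $\C_*^{n+1}$. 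Without this reparametrisation your $g$ is not defined and the inequality $I(g)\leq\int_\T\lambda\circ\tilde g^*\,d\sigma-\lambda(\tilde g(0))$ has no meaning.

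Two smaller points. First, the translation step you propose is unnecessary: one simply chooses $f_j$ with $f_j(0)=h(\eta_j)$ and normalises the lifting by $\tilde f_j(0)=\tilde h(\eta_j)$; the additive form of the assembly already recentres each graft. Second, the claim that $g^*$ takes a conull subset of $\T$ into $Y$ is too strong; the perturbation only lands the boundary values within $\delta$ of $\pi^{-1}(Y)\setminus B$, and one needs the relative compactness of $Y$ in $X$ (and the excision of a ball around the origin, where $\pi^{-1}(Y)$ accumulates) precisely to conclude that this $\delta$-neighbourhood sits inside $\pi^{-1}(X)$ and that $\lambda$ is uniformly continuous there, which is also what makes the estimate $\lambda\circ\tilde g_m^*\leq\lambda\circ\tilde f_j\circ\alpha_{jm}^*+\epsilon/4$ on $A_j$ legitimate.
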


This lemma is related to the fundamental theorem of the theory of disc functionals, which states that the largest plurisubharmonic minorant of an upper semicontinuous function $u$ on a domain $\Omega$ in $\C^n$ is the envelope of the Poisson functional $f\mapsto\int_\T u\circ f\,d\sigma$, $f\in\O(\overline\D,\Omega)$.  In \cite{LarussonSigurdsson1}, a similar lemma was established along the lines of Poletsky's original proof of the fundamental theorem \cite{Poletsky1}.  The present authors spent a great deal of time unsuccessfully trying to give a similar proof of Lemma \ref{mainlemma}.  The proof below is based on Bu and Schachermayer's alternative approach to the fundamental theorem \cite{BuSchachermayer}.  Their method is better suited to analytic discs with weak boundary regularity.

Before proving Lemma \ref{mainlemma}, let us quickly prove Theorem \ref{maintheorem}.

\begin{proof}[Proof of Theorem \ref{maintheorem}]  Let $Y$ be a nonempty relatively compact open subset of $X$.  Fixing $z\in \C^n$, and taking the infimum in Lemma \ref{mainlemma} over all $v$, $\epsilon$, and $h$ with $h(0)=z$, we see that $E_{\B_{\P^n}^X}I$ is no larger than the Poisson envelope, that is, the largest plurisubharmonic minorant, of $E_{\mathcal G_{\P^n}^Y}J$ on $\C^n$.  By Lemma \ref{pshminorantofenv}, this envelope is no larger than $V_Y$.  Thus, by Proposition \ref{inequality},
$$V_X \leq E_{\B_{\P^n}^X}I \leq V_Y.$$
Finally, the infimum of $V_Y$ over all relatively compact open subsets $Y$ of $X$ is $V_X$. 
\end{proof}

\begin{proof}[Proof of Lemma \ref{mainlemma}]  Let $B$ be the closed ball in $\C^{n+1}$ of radius $\frac 1 2 \sqrt{1+\min_\T\|h\|^2}$, centred at the origin.  There is $\delta>0$ so small that: 
\begin{enumerate}
\item[(a)]  $\pi^{-1}(X)$ contains the $\delta$-neighbourhood of $\pi^{-1}(Y)\setminus B$.
\item[(b)]  $\lambda$ is bounded below on the $\delta$-neighbourhood of $\pi^{-1}(Y)\setminus B$, so $\lambda$ is uniformly continuous there.
\item[(c)]  $|\lambda(z+w)-\lambda(z)|<\epsilon/4$ if $z\in\pi^{-1}(Y)\setminus B$ and $\|w\|<\delta$.
\end{enumerate}

For each $\eta\in\T$, find $f\in \mathcal G_{\P^n}^Y$ such that $f(0)=h(\eta)$ and $J(f)<E_{\mathcal G_{\P^n}^Y}J(h(\eta))+\epsilon/2$.  There is an open arc $A\subset\T$ containing $\eta$ such that $J(f)<v\circ h+\epsilon/2$ (here we need lower semicontinuity of $v$) and $\|h-h(\eta)\|<\delta/2$ on $A$.  We can cover $\T$ by open arcs of this kind and then pass to finitely many nondegenerate compact subarcs $A_1,\dots,A_k$ that cover $\T$ and are mutually disjoint except for common endpoints.  For $j=1,\dots,k$, let $f_j\in\mathcal G_{\P^n}^Y$ and $\eta_j\in A_j$ satisfy $f_j(0)=h(\eta_j)$, $J(f_j)<v\circ h+\epsilon/2$, and $\|h-h(\eta_j)\|<\delta/2$ on $A_j$.  Setting $a_j=\sigma(A_j)\in(0,1)$, we have
$$\sum_{j=1}^k a_j J(f_j) = \sum_{j=1}^k\int_{A_j}J(f_j)\,d\sigma<\int_\T v\circ h\,d\sigma+\epsilon/2.$$
Let $\tilde h$ be the holomorphic lifting $(1,h):\overline\D\to\C_*^{n+1}$ of $h$.  Let $\tilde f_j:\overline \D\to\C_*^{n+1}$ be a holomorphic lifting of $f_j$ with $\max_\T\|\tilde f_j\| < 2\min_\T\|\tilde f_j\|$ and $\tilde f_j(0)=\tilde h(\eta_j)$.  Then
$$\min_\T\|\tilde f_j\|>\tfrac 1 2\max_\T\|\tilde f_j\| \geq \tfrac 1 2 \|\tilde f_j(0)\| = \tfrac 1 2 \|\tilde h(\eta_j)\| \geq \tfrac 1 2 \sqrt{1+\min_\T\|h\|^2},$$
so $\tilde f_j(\T)\subset\pi^{-1}(Y)\setminus B$.

As in the proof of Lemma III.2 in \cite{BuSchachermayer}, for $j=1,\dots,k$, we can find a sequence of holomorphic functions $\alpha_{jm}:\D\to\D$, $m\in\mathbb N$, such that:
\begin{enumerate}
\item  $\alpha_{jm}(0)=e^{-m(1-a_j)}$.
\item  $|\alpha_{jm}^*|=1$ a.e.\ on $A_j$.
\item  $|\alpha_{jm}^*|=e^{-m}$ a.e.\ on $\T\setminus A_j$.
\item  $(\alpha_{jm}^*)_*\sigma|A_j \to a_j\sigma$ weakly on $\T$ as $m\to\infty$.
\end{enumerate}
The function $\alpha_{jm}$ is simply the outer function $\exp(s+it)$, where the harmonic function $s$ is the Poisson integral of the function that equals $0$ on $A_j$ and $-m$ on $\T\setminus A_j$, and the conjugate function $t$ is normalized by $t(0)=0$.  Only property (4) is nontrivial; for a proof, see \cite{BuSchachermayer}, p.\ 594.

Following the proof of Proposition III.3 in \cite{BuSchachermayer}, for each $m\in\mathbb N$, we define a holomorphic map $\tilde g_m:\D\to\C^{n+1}$ by the formula
$$\tilde g_m(z)=\tilde h(z)+\sum_{j=1}^k \big(\tilde f_j\circ\alpha_{jm}(z)-\tilde f_j\circ\alpha_{jm}(0)\big).$$
Note that $\tilde g_m$ is bounded and $\tilde g_m(0)=\tilde h(0)$.

We claim that for $m$ sufficiently large, the boundary map $\tilde g_m^*$ takes a conull subset of $\T$ into $\pi^{-1}(X)$.  There is $m_1\in\mathbb N$ such that for every $m\geq m_1$,
$$\|\tilde f_j(0)-\tilde f_j(e^{-m(1-a_j)})\|<\delta/4$$ 
and 
$$\|\tilde f_j-\tilde f_j(e^{-m(1-a_j)})\|< \dfrac \delta{4(k-1)} \quad\text{on } T_{e^{-m}}$$
for $j=1,\dots,k$.  Let $m\geq m_1$ and $1\leq j\leq k$.  For almost every $\zeta\in A_j$, $\alpha_{j m}^*(\zeta)$ exists and lies in $\T$, and, for each $\nu\neq j$, $\alpha_{\nu m}^*(\zeta)$ exists and lies in $T_{e^{-m}}$.  We have
\begin{align*}
\tilde g_m^*(\zeta)=\tilde f_j(\alpha_{j m}^*(\zeta)) &+ \big(\tilde h(\zeta)-\tilde h(\eta_j)\big) + \big(\tilde f_j(0)-\tilde f_j(e^{-m(1-a_j)})\big) \\ &+ \sum_{\nu\neq j} \big(\tilde f_\nu(\alpha_{\nu m}^*(\zeta))-\tilde f_\nu(e^{-m(1-a_\nu)})\big).
\end{align*}
Since $\|\tilde h(\zeta)-\tilde h(\eta_j)\|<\delta/2$, we see that $\tilde g_m^*(\zeta)$ lies within $\delta$ of $\tilde f_j(\T)$, so $\tilde g_m^*(\zeta)\in\pi^{-1}(X)$.

Next we claim that for $m$ sufficiently large, 
$$\int_\T\lambda\circ\tilde g_m^*\,d\sigma < \sum_{j=1}^k a_j J(f_j)+\epsilon/2.$$
Since the zeroth component of $\tilde f_j(0)=\tilde h(\eta_j)$ is $1$,
$$J(f_j)=\int_\T \lambda\circ\tilde f_j\,d\sigma.$$
On each arc $A_j$, 
$$\lambda\circ\tilde g_m^*=\lambda\circ(\tilde f_j\circ\alpha_{j m}^*+\xi) \text{ a.e.},$$
where
$$\xi=\big(\tilde h-\tilde h(\eta_j)\big) + \big(\tilde f_j(0)-\tilde f_j(e^{-m(1-a_j)})\big)+\sum_{\nu\neq j} \big(\tilde f_\nu\circ\alpha_{\nu m}^*-\tilde f_\nu (e^{-m(1-a_\nu)})\big).$$
For $m\geq m_1$, $\|\xi\|<\delta$ a.e.\ on $A_j$, so 
$$\lambda\circ\tilde g_m^* \leq \lambda\circ\tilde f_j\circ\alpha_{j m}^*+\epsilon/4\quad\text{a.e.\ on }A_j.$$
By (4) above,
$$\int_{A_j}\lambda\circ\tilde f_j\circ\alpha_{j m}^*\,d\sigma = \int_\T \lambda\circ\tilde f_j\,d(\alpha_{j m}^*)_*\sigma|A_j \to a_j\int_\T \lambda\circ\tilde f_j\,d\sigma = a_j J(f_j)$$
as $m\to\infty$.  Find $m_2\in\mathbb N$ such that 
$$\int_{A_j}\lambda\circ\tilde f_j\circ\alpha_{j m}^*\,d\sigma < a_j (J(f_j) + \epsilon/4)$$ 
for all $m\geq m_2$ and $j=1,\dots,k$.  We conclude that for $m\geq\max\{m_1,m_2\}$,
\begin{align*}
\int_\T\lambda\circ\tilde g_m^*\,d\sigma &\leq \sum_{j=1}^k \int_{A_j}\big(\lambda\circ\tilde f_j\circ\alpha_{jm}^*+\epsilon/4\big)\,d\sigma \\ &< \sum_{j=1}^k \big(a_j(J(f_j)+\epsilon/4)+a_j\epsilon/4\big) = \sum_{j=1}^k a_jJ(f_j)+\epsilon/2.
\end{align*}

For every $m\in\mathbb N$, $\tilde g_m^{-1}(0)$ is a discrete subset of $\D$ satisfying the Blaschke condition, so the harmonic measure of the complement $\D\setminus\tilde g_m^{-1}(0)$ with respect to the origin is simply $\sigma$.  Let $\phi_m:\D\to\D\setminus\tilde g_m^{-1}(0)$ be a holomorphic covering map with $\phi_m(0)=0$.  Then $\phi_m$ takes a conull subset of $\T$ into $\T$, and the measurable map $\phi_m^*:\T\to\T$ satisfies $(\phi_m^*)_*\sigma=\sigma$.

Let $\check g_m$ be the bounded analytic disc $\tilde g_m\circ\phi_m:\D\to\C_*^{n+1}$ with $\check g_m(0)=\tilde h(0)$.  We have shown that for $m$ sufficiently large, $\check g_m$ takes a conull subset of $\T$ into $\pi^{-1}(X)$, so $g_m=\pi\circ\check g_m\in \B_{\P^n}^X$, and
$$I(g_m)\leq \int_\T\lambda\circ\check g_m^*\,d\sigma = \int_\T\lambda\circ\tilde g_m^*\,d\sigma< \sum_{j=1}^k a_jJ(f_j)+\epsilon/2<\int_\T v\circ h\,d\sigma+\epsilon.$$
Taking $g=g_m$ for $m$ sufficiently large completes the proof.
\end{proof}

\section{A second disc formula and a characterization of polynomial hulls}

\noindent
An analytic disc $f:\D\to\C^n$ is said to be Nevanlinna if each of its components is a Nevanlinna function or, equivalently, the subharmonic function $\log\|f\|$ has a positive harmonic majorant on $\D$.  A Nevanlinna function can be factored, essentially uniquely, as $Bhs/t$, where $B$ is a Blaschke product, $h$ is an outer function, and $s$, $t$ are singular functions given by mutually singular measures.  A singular function $s$ is of the form
$$s(z)=\exp\int_\T \frac{z+\zeta}{z-\zeta}\,d\mu(\zeta),$$
where $\mu$ is a finite positive Borel measure on $\T$ that is singular with respect to $\sigma$ and uniquely determined by $s$.  We have $s(0)=e^{-\mu(\T)}$.  Customarily, $N$ denotes the class of all Nevanlinna functions, and $N^+$ denotes the class of Nevanlinna functions with $t=1$.  Let us introduce the notation $N^-$ for the class of Nevanlinna functions with $s=1$.

Let $f=(f_1,\dots,f_n):\D\to\C^n$ be a Nevanlinna disc.  Let the singular denominator $t_j$ of $f_j$ be given by the measure $\nu_j$.  Let $\nu$ be the smallest Borel measure on $\T$ with $\nu\geq\nu_j$ for $j=1,\dots,n$.  It is the measure corresponding to the singular function $t$ that is the least common multiple of $t_1,\dots,t_n$ with respect to the order defined by divisibility on the set of singular functions.  We call 
$$\nu(f)=\nu(\T)=-\log t(0)\in[0,\infty)$$
the negative mass of $f$.  We have $\nu(f)=0$ if and only if $f_1,\dots,f_n\in N^+$.

Let $X$ be an open subset of $\C^n$.  We denote by $\mathcal N_X$ the set of Nevanlinna discs in $\C^n$ whose boundary map takes a conull subset of $\T$ into $X$, and by $\mathcal N_X^-\subset \mathcal N_X$ the subset of discs whose components lie in $N^-$.

\begin{proposition}
\label{nu=I}
{\rm (a)}  Let $X$ be an open subset of $\C^n$.  Then $\mathcal N_X\subset\B_{\P^n}^X$. \newline
{\rm (b)}  Let $f$ be a Nevanlinna disc in $\C^n$.  Then $\nu(f)=I(f)$.
\end{proposition}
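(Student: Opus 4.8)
The plan is to prove both parts at once by constructing a single convenient bounded lifting of the projective disc associated with $f$ and then reading off the value of $I$ from the minimum formula for $I$ recorded just before the proposition.

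First I would fix the factorizations $f_j=B_jh_js_j/t_j$, writing $\nu_j$ for the measure of the singular denominator $t_j$ and $\rho_j$ for the measure of the singular numerator $s_j$, so that $\rho_j\perp\nu_j$, and letting $t$, with measure $\nu=\bigvee_j\nu_j$, be the least common multiple. To produce a bounded lifting I would multiply the tautological lifting $(1,f_1,\dots,f_n)$ of the projective disc $[1:f_1:\cdots:f_n]$ by the single nonvanishing bounded factor $u=t\,w$, where $w$ is the bounded outer function with $|w^*|=(1+\sum_j|h_j^*|)^{-1}$ on $\T$. One checks that $\log|w^*|\in L^1(\T)$ because each $\log^+|h_j^*|\in L^1(\T)$, so $w$ is well defined, outer, and bounded by $1$. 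Since $t/t_j$ is singular inner and $wh_j$ is a bounded outer function, each $uf_j=[B_js_j(t/t_j)][wh_j]$ is a product of an inner function and a bounded outer function, hence bounded, while $u$ itself is bounded and nowhere vanishing. Thus $\tilde f=u\,(1,f_1,\dots,f_n)$ is a bounded holomorphic lifting into $\C_*^{n+1}$; its zeroth component $u$ is nonvanishing, so $f(0)\in\C^n$, and because $|u^*|>0$ a.e.\ its boundary map agrees a.e.\ with $[1:f_1^*:\cdots:f_n^*]$. For part (a) this immediately gives $\mathcal N_X\subset\B_{\P^n}^X$, since the latter boundary map sends a conull subset of $\T$ into $X$ by hypothesis. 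The same lifting also shows that $I(f)$ is defined, which is what part (b) requires.

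For part (b) I would use the identity $I(f)=\min_{\tilde f}\big(\int_\T\lambda\circ\tilde f^*\,d\sigma-\lambda(\tilde f(0))\big)$, where $\tilde f$ ranges over bounded liftings. Every such lifting has the form $v\,(1,f_1,\dots,f_n)$ for a nonvanishing $v\in H^\infty$ with all $vf_j$ bounded, so its zeroth component is $v$. Factoring $v=s_vh_v$ into a singular inner function times an outer function (no Blaschke part, as $v$ is nonvanishing) and writing $\mu_v$ for the measure of $s_v$, a one-line computation using $|s_v^*|=1$ a.e.\ and the outer normalization $\int_\T\log|h_v^*|\,d\sigma=\log|h_v(0)|$ gives $\int_\T\log|v^*|\,d\sigma-\log|v(0)|=-\log|s_v(0)|=\mu_v(\T)$. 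Hence $I(f)=\min_v\mu_v(\T)$, the minimum taken over admissible $v$. The construction above with $v=u=t\,w$ has $\mu_u=\nu$, so $I(f)\le\nu(\T)$.

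It remains to prove the reverse inequality, which I expect to be the crux. Here I would analyze the admissibility constraint: $vf_j=B_js_js_vh_jh_v/t_j$ lies in $H^\infty\subset N^+$ only if its singular denominator vanishes, that is, $\rho_j+\mu_v\ge\nu_j$ as measures. The essential point is that, because $\rho_j\perp\nu_j$, the numerator singular part $\rho_j$ cannot help cancel $\nu_j$: choosing a Borel carrier $C_j$ of $\nu_j$ on which $\rho_j$ vanishes forces $\mu_v\ge\nu_j$ on $C_j$, and since $\nu_j$ is carried by $C_j$ this gives $\mu_v\ge\nu_j$ as measures, for every $j$. By minimality of $\nu=\bigvee_j\nu_j$ this yields $\mu_v\ge\nu$, hence $\mu_v(\T)\ge\nu(\T)$ for every admissible $v$. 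Combining the two inequalities gives $I(f)=\nu(\T)=\nu(f)$, as desired.
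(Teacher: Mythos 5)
Your proof is correct and follows essentially the same route as the paper's: an explicit bounded lifting obtained by clearing the singular denominators with the least common multiple $t$ (times an outer normalizing factor), combined with the minimum formula for $I$ and the mutual singularity of $s_j$ and $t_j$ to show that the singular inner part of the zeroth component of any bounded lifting must dominate $\nu$. The only cosmetic differences are your choice of a single outer factor $w$ with $|w^*|=(1+\sum_j|h_j^*|)^{-1}$ in place of the paper's product $v_1\cdots v_n$ of outer denominators, and your parametrization of all bounded liftings by a nonvanishing scalar $v\in H^\infty$ rather than a componentwise factorization.
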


\begin{proof}  (a)  This follows easily from the fact that every Nevanlinna function is the quotient of a bounded holomorphic function by a nowhere-vanishing bounded holomorphic function.

(b)  Let $f=(f_1,\dots,f_n)$ be a Nevanlinna disc in $\C^n$.  For $j=1,\dots,n$, write $f_j=B_j h_j s_j/t_j$, where $B_j$ is Blaschke, the singular functions $s_j$, $t_j$ are given by mutually singular measures, and $h_j=u_j/v_j$, where $u_j$, $v_j$ are bounded and outer.  Let $t$ be the least common multiple of $t_1,\dots,t_n$, say $t=r_j t_j$, where $r_j$ is singular.  Then $\nu(f)=-\log t(0)$, and
$$(t v_1\cdots v_n, B_1 u_1 v_2\cdots v_n r_1 s_1 , \dots, B_n u_n v_1\cdots v_{n-1} r_n s_n):\D\to\C_*^{n+1}$$
is a bounded holomorphic lifting of $f$.

Let $\tilde g=(g_0,\dots,g_n):\D\to\C_*^{n+1}$ be a bounded holomorphic lifting of $f$.  For $j=0,\dots,n$, write $g_j=B'_j h'_j s'_j$, where $B'_j$ is Blaschke, $h'_j$ is bounded and outer, and $s'_j$ is singular.  Of course $B'_0=1$.  For $j=1,\dots,n$, we have
$$\frac{B'_j h'_j s'_j}{h'_0 s'_0} = \frac{g_j}{g_0} = f_j = \frac{B_j h_j s_j}{t_j}.$$
Since $s_j$ and $t_j$ are mutually singular, $t_j$ divides $s'_0$ (and $s_j$ divides $s'_j$) for $j=1,\dots,n$, so $t$ divides $s'_0$ and $-\log t(0)\leq -\log s'_0(0)$.

By definition, $I(f)$ is the minimum of $-\log s'_0(0)$ over all bounded holomorphic liftings $\tilde g$ of $f$ as above.  It follows that $\nu(f)=I(f)$.
\end{proof}

Our second disc formula for the Siciak-Zahariuta extremal function is a corollary of the disc formula in Theorem \ref{maintheorem}.

\begin{theorem}  
\label{secondformula}
If $X$ is an open subset of $\C^n$, then
$$V_X=E_{\mathcal N_X}\nu=E_{\mathcal N_X^-}\nu.$$
\end{theorem}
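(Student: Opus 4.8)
The plan is to sandwich the three quantities. Since $\mathcal N_X^-\subset\mathcal N_X\subset\B_{\P^n}^X$ — the last inclusion being Proposition \ref{nu=I}(a) — and the envelope of a fixed functional decreases as the class of discs grows, and since $\nu=I$ on every Nevanlinna disc by Proposition \ref{nu=I}(b), I would first record the chain
$$V_X=E_{\B_{\P^n}^X}I\leq E_{\mathcal N_X}I=E_{\mathcal N_X}\nu\leq E_{\mathcal N_X^-}\nu,$$
where the first equality is Theorem \ref{maintheorem}. The reverse of the first inequality is immediate: any $f\in\B_{\P^n}^X$ with bounded lifting $(f_0,\dots,f_n)$ has affine representative $(f_1/f_0,\dots,f_n/f_0)$, which is a Nevanlinna disc lying in $\mathcal N_X$, with the same centre and with $\nu$ equal to $I(f)$; taking the infimum over $f$ gives $E_{\mathcal N_X}\nu\leq E_{\B_{\P^n}^X}I$. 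Thus $V_X=E_{\mathcal N_X}\nu$ already, and it remains only to prove the single inequality $E_{\mathcal N_X^-}\nu\leq V_X$.

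For this I would return to the construction in the proof of Lemma \ref{mainlemma} and argue that, for a nonempty relatively compact open $Y\subset X$, the disc $g=g_m$ produced there (for $m$ large) can be taken to lie in $\mathcal N_X^-$, not merely in $\B_{\P^n}^X$. Granting this, $\nu(g)=I(g)$ by Proposition \ref{nu=I}(b), so the estimate obtained there, $I(g)<\int_\T v\circ h\,d\sigma+\epsilon$, reads $\nu(g)<\int_\T v\circ h\,d\sigma+\epsilon$ with $g\in\mathcal N_X^-$ and $g(0)=h(0)$. Running the proof of Theorem \ref{maintheorem} verbatim, but with $\mathcal N_X^-$ in place of $\B_{\P^n}^X$, then shows that $E_{\mathcal N_X^-}\nu$ is no larger than the largest plurisubharmonic minorant of $E_{\mathcal G_{\P^n}^Y}J$, hence no larger than $V_Y$ by Lemma \ref{pshminorantofenv}, and finally no larger than $V_X$ upon letting $Y$ exhaust $X$. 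Together with the chain above this collapses all three quantities to $V_X$.

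The crux, and the step I expect to be the main obstacle, is therefore verifying that the affine components of $g_m$ have trivial singular numerator, that is, lie in $N^-$. Writing $\check g_m=\tilde g_m\circ\phi_m$ as in the proof, these components are the quotients $((\tilde g_m)_k\circ\phi_m)/((\tilde g_m)_0\circ\phi_m)$. The building blocks $\tilde f_j$ are liftings of closed analytic discs, hence holomorphic across $\T$ and free of singular inner factors; the functions $\alpha_{jm}$ are outer, with $|\alpha_{jm}|\in[e^{-m},1]$; and the reparametrizing covering map $\phi_m$ is inner with $(\phi_m^*)_*\sigma=\sigma$. One must check that none of these operations manufactures a singular numerator in the quotient. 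The delicate points are that a composition $(\tilde f_j)_k\circ\alpha_{jm}$ may acquire a singular inner factor at an endpoint of the arc $A_j$ where $(\tilde f_j)_k$ vanishes on $\T$, and that a Blaschke factor composed with the inner map $\phi_m$ may itself acquire a singular factor. What has to be shown is that any such factor appearing in a higher component is already present in the zeroth component $(\tilde g_m)_0\circ\phi_m$, so that it cancels in the quotient and leaves $g_m\in\mathcal N_X^-$; should this cancellation fail, one would instead modify the construction (by the choice of $Y$, of the building blocks, or by inserting an additional inner reparametrization). I expect this bookkeeping of singular inner factors, rather than any new analytic input, to be the heart of the matter.

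Finally, I would note why I avoid the tempting direct reduction $\mathcal N_X\to\mathcal N_X^-$: given $f\in\mathcal N_X$ with components $B_jh_js_j/t_j$, one might simply replace the singular numerators $s_j$ by $1$. This preserves $\nu$ and the boundary moduli, but it alters the boundary \emph{values} on a conull set and moves the centre, so the naive division does not keep the disc in $\mathcal N_X^-$ with the prescribed centre; repairing it appears to require exactly the Poletsky-type construction above. This is why I would organize the argument around the discs of Lemma \ref{mainlemma} rather than around an abstract manipulation of a given Nevanlinna disc.
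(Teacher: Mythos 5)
Two steps of your plan have genuine gaps. First, your claim that the affine representative $(f_1/f_0,\dots,f_n/f_0)$ of an arbitrary $f\in\B_{\P^n}^X$ is a Nevanlinna disc in $\mathcal N_X$ is false in general: a disc in $\B_{\P^n}^X$ is only required to have centre in $\C^n$ and boundary values in $X$, and it may meet $H_\infty$ at the zeros of $f_0$ in $\D$, where $f_j/f_0$ has poles, so the affine representative is not a holomorphic map into $\C^n$ at all. The paper repairs exactly this by precomposing $f$ with a holomorphic covering map $\phi:\D\to\D\setminus f^{-1}(H_\infty)$ with $\phi(0)=0$; since $f^{-1}(H_\infty)$ satisfies the Blaschke condition, the harmonic measure of $\D\setminus f^{-1}(H_\infty)$ at the origin is $\sigma$, so $\phi^*$ preserves $\sigma$ and the reparametrized disc $g=f\circ\phi$ lies in $\mathcal N_X$ with $g(0)=f(0)$ and $\nu(g)=I(g)\leq I(f)$. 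With this correction your first chain does yield $V_X=E_{\mathcal N_X}\nu$.

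Second, and more seriously, the inequality $E_{\mathcal N_X^-}\nu\leq E_{\mathcal N_X}\nu$ is never actually established: you rightly reject the naive replacement $s_j\mapsto 1$, but your substitute --- re-examining the discs $g_m$ of Lemma \ref{mainlemma} and hoping that every singular inner factor created by the compositions with $\alpha_{jm}$ and with the inner covering map $\phi_m$ cancels against the zeroth component --- is exactly the sort of bookkeeping that tends not to close (a Blaschke product composed with an inner function generically does acquire singular factors, and nothing forces them to divide $(\tilde g_m)_0\circ\phi_m$). The missing idea is Frostman's theorem. After reducing, as you may by exhausting $X$ by relatively compact open sets $Y$, to a disc $f\in\mathcal N_X$ whose boundary values lie in a compact subset $K$ of $X$, one replaces each singular numerator $s_j$ by $\psi_a\circ s_j$, which is a Blaschke product for almost every $a\in\D$, and multiplies the outer factor $h_j$ by the constant $s_j(0)/\psi_a(s_j(0))$. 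For $a$ close to $0$ this multiplies the boundary values of $f_j$ by a factor uniformly close to $1$ (since $|s_j^*|=1$ a.e.), so the new disc stays in $\mathcal N_X^-$, keeps the centre, and has the same negative mass because the denominators $t_j$ are untouched. This local surgery on a single given disc is what replaces your proposed global rerun of Lemma \ref{mainlemma}, which as outlined remains an unproved hope rather than an argument.
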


\begin{proof}  By Proposition \ref{nu=I}, $E_{\B_{\P^n}^X}I\leq E_{\mathcal N_X}\nu$.  To prove the opposite inequality, take $f\in\B_{\P^n}^X$.  Let $\tilde f=(f_0,\dots,f_n):\D\to\C_*^{n+1}$ be a bounded holomorphic lifting of $f$ with $I(f)=-\log|i(0)|$, where $i$ is the inner factor of $f_0$.  As in the proof of Lemma \ref{mainlemma}, we note that the discrete subset $f_0^{-1}(0)=f^{-1}(H_\infty)$ of $\D$ satisfies the Blaschke condition, so the harmonic measure of the complement $\D\setminus f^{-1}(H_\infty)$ with respect to the origin is $\sigma$.  Let $\phi:\D\to\D\setminus f^{-1}(H_\infty)$ be a holomorphic covering map with $\phi(0)=0$.  It takes a conull subset of $\T$ into $\T$, and its boundary map preserves $\sigma$.  Therefore, $g=f\circ\phi\in\mathcal N_X$, $g(0)=f(0)$, and, by Proposition \ref{nu=I}, 
$$\nu(g)=I(g) \leq \int_\T\lambda\circ\tilde g^*\,d\sigma - \lambda(\tilde g(0))= \int_\T\lambda\circ\tilde f^*\,d\sigma - \lambda(\tilde f(0))=I(f),$$
where $\tilde g$ is the lifting $\tilde f\circ\phi$ of $g$.  By Theorem \ref{maintheorem}, this shows that $V_X=E_{\mathcal N_X}\nu$.

As $Y$ runs through all relatively compact open subsets of $X$, the infimum of $V_Y=E_{\mathcal N_Y}\nu$ is $V_X=E_{\mathcal N_X}\nu$.  It follows that $V_X$ is the envelope of $\nu$ with respect to the class of discs in $\mathcal N_X$ that take a conull subset of $\T$ into a compact subset of $X$.

To show that $E_{\mathcal N_X}\nu=E_{\mathcal N_X^-}\nu$, take $f\in\mathcal N_X$.  As just noted, we may assume that $f^*$ takes a conull subset of $\T$ into a compact subset $K$ of $X$.  Find $\epsilon>0$ such that $(c_1y_1,\dots,c_ny_n)\in X$ if $(y_1,\dots,y_n)\in K$ and $c_j\in \C$, $|c_j-1|<\epsilon$, for $j=1,\dots,n$.

For $j=1,\dots,n$, write $f_j=B_j h_j s_j/t_j$, where $B_j$ is Blaschke, $h_j$ is outer, and the singular functions $s_j$, $t_j$ are given by mutually singular measures.  For $a\in\D$, let $\psi_a$ be the automorphism $z\mapsto\dfrac{z-a}{1-\bar a z}$ of $\D$.  By a theorem of Frostman (\cite{Garnett}, Theorem 6.4), if $k$ is a nonconstant inner function, then $\psi_a\circ k$ is a Blaschke product for almost every $a\in\D$ (in fact outside a set of capacity zero).  Find $a\in \D$ close enough to $0$ that
$$\bigg|\frac{s_j(0)}{\psi_a(s_j(0))} \frac{\psi_a(\zeta)}\zeta - 1 \bigg|<\epsilon$$
for all $\zeta\in\T$, and such that $\psi_a\circ s_j$ is a Blaschke product (or the constant 1), for $j=1,\dots,n$.  Define
$$g_j=\big[B_j (\psi_a\circ s_j)\big]\bigg[\frac{s_j(0)}{\psi_a(s_j(0))}h_j\bigg]/t_j.$$
Then $g=(g_1,\dots,g_n)\in\mathcal N_X^-$.  Clearly, $g(0)=f(0)$ and $\nu(g)=\nu(f)$.
\end{proof}

A description of the polynomial hull of an arbitrary compact subset of $\C^n$ in terms of Nevanlinna discs is now immediate.

\begin{corollary}
\label{hulls}
Let $K$ be a compact subset of $\C^n$.  For $a\in\C^n$, the following are equivalent.
\begin{enumerate}
\item[(i)]  $a$ is in the polynomial hull $\hat K$ of $K$.
\item[(ii)]  For every neighbourhood $U$ of $K$ and every $\epsilon>0$, there is $f\in\mathcal N_U^-$ with $f(0)=a$ and $\nu(f)<\epsilon$.
\item[(iii)]  For every neighbourhood $U$ of $K$ and every $\epsilon>0$, there is $f\in\mathcal N_U$ with $f(0)=a$ and $\nu(f)<\epsilon$.
\end{enumerate}
\end{corollary}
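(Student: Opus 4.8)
The plan is to translate conditions (ii) and (iii) into the vanishing of Siciak--Zahariuta extremal functions of the open neighbourhoods of $K$, and then to match this with (i) via the classical relationship between the polynomial hull and these functions. First I would fix an open neighbourhood $U$ of $K$ and apply Theorem \ref{secondformula}, which gives $E_{\mathcal N_U}\nu=E_{\mathcal N_U^-}\nu=V_U$. Here $V_U$ lies in the Lelong class, hence is finite-valued, and $V_U\geq 0$ since the zero function competes for $V_U$; also $\nu\geq 0$. Therefore the existence, for every $\epsilon>0$, of a disc $f\in\mathcal N_U$ (respectively $f\in\mathcal N_U^-$) with $f(0)=a$ and $\nu(f)<\epsilon$ is exactly the statement that $E_{\mathcal N_U}\nu(a)=0$ (respectively $E_{\mathcal N_U^-}\nu(a)=0$), that is, that $V_U(a)=0$. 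Thus both (ii) and (iii) are equivalent to the single condition that $V_U(a)=0$ for every open neighbourhood $U$ of $K$; in particular (ii) $\Leftrightarrow$ (iii) is immediate, and it remains only to identify this condition with (i).

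For (i) $\Rightarrow$ (ii) I would use the classical identity $\hat K=\{z\in\C^n:V_K(z)=0\}$ (a theorem of Siciak) together with monotonicity of the extremal function. Since $K\subset U$, every function in the Lelong class that is $\leq 0$ on $U$ is $\leq 0$ on $K$, so $V_U\leq V_K$, while $V_U\geq 0$ always. Hence for $a\in\hat K$ we have $0\leq V_U(a)\leq V_K(a)=0$ for every open $U\supset K$, which is precisely the condition extracted above.

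For the converse I would argue directly with polynomials, thereby avoiding the hard half of Siciak's theorem. Suppose $V_U(a)=0$ for every open neighbourhood $U$ of $K$. Let $p$ be a polynomial of degree $d\geq 1$ with $\|p\|_K\leq 1$, and fix $\delta>0$. The sublevel set $U=\{z:|p(z)|<1+\delta\}$ is an open neighbourhood of $K$, and the function $u=\tfrac 1d\log|p|-\tfrac 1d\log(1+\delta)$ lies in the Lelong class and satisfies $u\leq 0$ on $U$, so $u\leq V_U$ on $\C^n$. Evaluating at $a$ gives $|p(a)|\leq 1+\delta$, and letting $\delta\to 0$ yields $|p(a)|\leq 1$. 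Rescaling handles an arbitrary $p$ of degree $\geq 1$ (with the case $\|p\|_K=0$ dispatched by applying this to $cp$ for all $c>0$), and constants are trivial, so $|p(a)|\leq\|p\|_K$ for every polynomial $p$, whence $a\in\hat K$.

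The only substantial input beyond Theorem \ref{secondformula} is the classical fact $\hat K=\{V_K=0\}$, and within it the nontrivial inclusion $\hat K\subset\{V_K\leq 0\}$ is exactly what powers the implication (i) $\Rightarrow$ (ii). This is the step I expect to lean on a cited result rather than reprove, and it is the main obstacle in the sense that it is the one place where the polynomial definition of the hull must be converted into control over arbitrary plurisubharmonic functions of minimal growth. Everything else is a routine reformulation of the disc-functional conditions and an elementary approximation by sublevel sets of individual polynomials; I would deliberately keep the forward and reverse directions asymmetric so that the reverse direction stays self-contained.
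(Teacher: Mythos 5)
Your proof is correct and follows essentially the same route as the paper: Theorem \ref{secondformula} converts (ii) and (iii) into the vanishing of $V_U(a)$ for every open neighbourhood $U$ of $K$, and this is then matched with (i) via the classical description of $\hat K$ by the extremal function. The only difference is cosmetic: where the paper cites the identity $V_K=\sup_U V_U$ over a neighbourhood basis together with $\hat K=\{V_K=0\}$, you use the easy inequality $V_U\leq V_K$ for one implication and replace the other with a direct, self-contained argument on sublevel sets $\{|p|<1+\delta\}$ of polynomials, which is a perfectly sound substitute.
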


It may shed light on this result to note that for every nonempty open subset $U$ of $\C^n$ and every $a\in\C^n$, there is $f\in\mathcal N_U^-$ with $f(0)=a$.  The question is how small the negative mass of $f$ can be.

In \cite{LarussonSigurdsson2}, we proved the equivalence of (i) and (ii) under the assumption that $K$ is connected.

\begin{proof}    We have $a\in\hat K$ if and only if $V_K(a)=0$, where $V_K$ is the unregularized Siciak-Zahariuta extremal function of $K$.  Also, $V_K=\sup V_U$, where $U$ runs through any basis of neighbourhoods of $K$ in $\C^n$.  Hence, $a\in\hat K$ if and only if $V_U(a)=0$ for every open neighbourhood $U$ of $K$.  By Theorem \ref{secondformula}, this means that for every open neighbourhood $U$ of $K$ and every $\epsilon>0$, there is $f\in\mathcal N_U$, or equivalently $f\in\mathcal N_U^-$, with $f(0)=a$ and $\nu(f)<\epsilon$.
\end{proof}


\begin{thebibliography}{99}

\bibitem{BuSchachermayer} 
S. Q. Bu and W. Schachermayer.  {\it Approximation of Jensen measures by image measures under holomorphic functions and applications.}  Trans. Amer. Math. Soc. 331 (1992) 585--608.

\bibitem{Garnett}
J. B. Garnett.  {\it Bounded analytic functions.}  Academic Press, 1981.

\bibitem{LarussonSigurdsson0}
F. L\'arusson and R. Sigurdsson.  {\it Plurisubharmonic functions and analytic discs on manifolds.}  J. reine angew. Math. 501 (1998) 1--39.

\bibitem{LarussonSigurdsson1}
F. L\'arusson and R. Sigurdsson.  {\it The Siciak-Zahariuta extremal function as the envelope of disc functionals.}  Ann. Polon. Math. 86 (2005) 177--192.

\bibitem{LarussonSigurdsson2}  
F. L\'arusson and R. Sigurdsson.  {\it Siciak-Zahariuta extremal functions and polynomial hulls.}  Ann. Polon. Math. 91 (2007) 235--239.

\bibitem{Momm}
S. Momm.  {\it An extremal plurisubharmonic function associated to a convex pluricomplex Green function with pole at infinity.}  J. reine angew. Math. 471 (1996) 139--163.

\bibitem{Poletsky1}
E. A. Poletsky.  {\it  Plurisubharmonic functions as solutions of variational problems.}  Several complex variables and complex geometry (Santa Cruz, CA, 1989), 163--171, Proc. Sympos. Pure Math., 52, Part 1, Amer. Math. Soc., 1991.

\bibitem{Poletsky2}
E. A. Poletsky.  {\it Holomorphic currents.}  Indiana Univ. Math. J. 42 (1993) 85--144.

\end{thebibliography}
\end{document}